\newtheorem{neu}{}[section]
\newtheorem{Cor}[neu]{Corollary}
\newtheorem*{Cor*}{Corollary}
\newtheorem{Thm}[neu]{Theorem}
\newtheorem*{Thm*}{Theorem}
\newtheorem*{Observation*}{Observation}
\newtheorem{Prop}[neu]{Proposition}
\newtheorem*{Prop*}{Proposition}
\theoremstyle{definition}
\newtheorem{Lemma}[neu]{Lemma}
\newtheorem*{Rmk*}{Remark}
\newtheorem{Rmk}[neu]{Remark}
\newtheorem*{Ex*}{Example}
\newtheorem*{Qu*}{Question}
\newtheorem{Def}[neu]{Definition}
\newcommand{\R}{\mathbb{R}}
\newcommand{\C}{\mathbb{C}}
\newcommand{\om}{\omega}
\renewcommand{\L}{\mathscr{L}}
\newcommand{\beq}{\begin{equation}}
\newcommand{\beqn}{\begin{equation}\nonumber}
\newcommand{\eeq}{\end{equation}}
\newcommand{\bea}{\begin{equation}\begin{aligned}}
\newcommand{\bean}{\begin{equation}\begin{aligned}\nonumber}
\newcommand{\eea}{\end{aligned}\end{equation}}
\numberwithin{equation}{section}
\definecolor{Urs}{rgb}{0,.7,0}
\definecolor{Peter}{rgb}{0,0,1}
\definecolor{red}{rgb}{1,0,0}
\DeclareMathOperator{\id}{{Id}}
\DeclareMathOperator{\im}{Im}
\DeclareMathOperator{\re}{Re}
\DeclareMathOperator{\si}{sign}
\DeclareMathOperator{\pa}{par}
\begin{document}
%%%%%%%%%%%%%%%%%%%%%%%%%%%
\title{A $\Gamma$-structure on Lagrangian Grassmannians}
\author{Peter Albers}
\author{Urs Frauenfelder}
\author{Jake P. Solomon}
\address{
    Peter Albers\\
    Mathematisches Institut\\
    Westf\"alische Wilhelms-Universit\"at M\"unster}
\email{peter.albers@wwu.de}
\address{
    Urs Frauenfelder\\
    Department of Mathematics and Research Institute of Mathematics\\
    Seoul National University}
\email{frauenf@snu.ac.kr}
\address{
    Jake P. Solomon\\
    Institute of Mathematics\\
    Hebrew University of Jerusalem}
\email{jake@math.huji.ac.il}
\keywords{}
%\date{\today}
%\subjclass[2000]{}
\begin{abstract}
For $n$ odd the Lagrangian Grassmannian of $\R^{2n}$ is a $\Gamma$-manifold.
\end{abstract}
\maketitle
%\tableofcontents

\section{Introduction and statement of the result}

We denote by $(\R^{2n},\om)$ the standard symplectic vector space. The (unoriented) Lagrangian Grassmannian $\L$ is the space of all Lagrangian subspaces of $\R^{2n}$. It is a homogeneous space
\[
\L\cong U(n)/O(n),
\]
see \cite{Arnold_Givental_Symplectic_geometry,McDuff_Salamon_introduction_symplectic_topology}. Every Lagrangian subspace can be identified with the fixed point set of a linear orthogonal anti-symplectic involution. Using this identification, we define a smooth map
\[
\Theta:\L\times\L\to\L
\]
by
\[
(R,S)\mapsto RSR,
\]
which we think of as a product. On every space there are products such as  constant maps and projections to one factor. In \cite{Hopf_Uber_die_Topologie_der_Gruppenmanigfaltigkeiten} Hopf introduced the notion of $\Gamma$-manifolds which rules these trivial products out. The purpose of this paper is to prove that the above product gives the Lagrangian Grassmannian $\L$ the structure of a $\Gamma$-manifold for $n$ odd.

\begin{Def}
A closed, connected, orientable manifold $M$ carries the structure of a $\Gamma$-manifold if there exists a map
\[
\Theta:M\times M\to M
\]
such that the maps
\[
x\mapsto\Theta(x,y_0)\quad\text{and}\quad y\mapsto\Theta(x_0,y)
\]
have non-zero mapping degree for one and thus all pairs $(x_0,y_0)\in M\times M$.
\end{Def}

It is well-known that $\L$ is orientable if and only if $n$ odd, see \cite{Fuks_Maslov_Arnold_characteristic_classes}. The main result of this article is the following theorem.

\begin{Thm}\label{thm:thm_1}
If $n$ is odd, then $(\L,\Theta)$ is a $\Gamma$-manifold.
\end{Thm}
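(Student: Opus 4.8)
The plan is to pass to an equivalent linear-algebraic model in which both slice maps become explicit. Fix the standard complex structure $J$ on $\R^{2n}=\C^n$ and the standard involution $R_0$ (complex conjugation). An orthogonal anti-symplectic involution $R$ is a symmetric orthogonal matrix anti-commuting with $J$; then $RR_0$ commutes with $J$ and is orthogonal, hence lies in $U(n)$, and a short check shows that $U:=RR_0$ is a \emph{symmetric} unitary, with $R=UR_0$. This sets up a diffeomorphism between $\L$ and the space $\Sigma_n=\{U\in U(n): U=U^T\}$ of symmetric unitaries, under which $R_0$ corresponds to the identity. Writing $R=UR_0$, $S=VR_0$ and using $R_0VR_0=\overline V$, one computes $\Theta(R,S)=RSR=U\overline V U\,R_0$, so in the model $\Sigma_n$ the product is $(U,V)\mapsto U\overline V U$.

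In this model the two slice maps are transparent. Taking $x_0=R_0$, i.e.\ $U=I$, the map $y\mapsto\Theta(x_0,y)$ becomes $V\mapsto\overline V$, an involution of $\Sigma_n$ and hence a diffeomorphism; since $\L$ is orientable for $n$ odd, its degree is $\pm1\neq0$. Taking $y_0=R_0$, i.e.\ $V=I$, the map $x\mapsto\Theta(x,y_0)$ becomes the squaring map $q\colon\Sigma_n\to\Sigma_n$, $q(U)=U^2$. The whole content of the theorem is therefore the computation that $\deg q\neq0$, and this is the step I expect to be the main obstacle.

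To compute $\deg q$ I would use a regular value and count preimages with signs. Every symmetric unitary is of the form $ODO^T$ with $O\in O(n)$ real and $D$ diagonal unitary (diagonalize the commuting real symmetric matrices $\re U,\im U$), so its eigenvectors may be taken real. For a generic $W$ with $n$ distinct eigenvalues $\lambda_j=e^{i\theta_j}$, any square root commutes with $W$ and is therefore simultaneously diagonalizable, giving exactly the $2^n$ square roots $U^{(\epsilon)}=O\,\mathrm{diag}((-1)^{\epsilon_j}\sqrt{\lambda_j})\,O^T$, $\epsilon\in\{0,1\}^n$; by $O(n)$-equivariance of $q$ I may take $W$ and all $U^{(\epsilon)}$ diagonal. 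Differentiating, $dq_D(\xi)=\xi D+D\xi$ acts on the matrix entry $\xi_{jk}$ by multiplication by $\mu_j+\mu_k$ (where $D=\mathrm{diag}(\mu_j)$, $\mu_j=e^{i\alpha_j}$), and a description of $T_D\Sigma_n$ shows that the diagonal entries contribute a positive factor while the $(j,k)$-line contributes the sign of $\cos\frac{\alpha_j-\alpha_k}{2}$.

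The delicate point is orientation bookkeeping: to read off local degrees I must fix a single global orientation of $\Sigma_n$. I would orient each tangent space along the diagonal torus by the ordered basis consisting of the torus directions together with the vectors $v_{jk}=ie^{i(\alpha_j+\alpha_k)/2}$ spanning the off-diagonal lines. Changing the branch of a half-angle $e^{i\alpha_j/2}$ flips the $n-1$ vectors $v_{jk}$ with $k\neq j$, multiplying the orientation by $(-1)^{n-1}$; thus the orientation is well defined precisely when $n$ is odd, recovering the orientability of $\L$ and, more importantly, letting me evaluate every local index against one and the same orientation. Carrying this through, the local index at $U^{(\epsilon)}$ works out to $(-1)^{N(\epsilon)}$, where $N(\epsilon)$ counts the pairs $j<k$ with $\epsilon_j=1$, $\epsilon_k=0$ (after ordering $\theta_1<\cdots<\theta_n$), so that
\[
\deg q=\sum_{\epsilon\in\{0,1\}^n}(-1)^{N(\epsilon)}=\sum_{m=0}^{n}\binom{n}{m}_{-1}=2^{(n+1)/2}\neq0,
\]
using the evaluation of the Gaussian binomial coefficients at $-1$ for $n$ odd. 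Since both slice maps have non-zero degree, $(\L,\Theta)$ is a $\Gamma$-manifold.
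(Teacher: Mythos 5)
Your proposal follows essentially the same route as the paper: the same identification of $\L$ with symmetric unitaries (equivalently $A\overline{A}=\id$), the same observation that the slice $V\mapsto\overline{V}$ is an involution of degree $\pm1$, and the same signed count of the $2^n$ diagonal square roots of a generic diagonal matrix, with local signs governed by $\si\cos\bigl((\theta_j-\theta_k)/4+(\epsilon_j-\epsilon_k)\pi/2\bigr)$ and total degree $2^{(n+1)/2}=2^{m+1}$. The remaining differences are implementation details rather than a different route: you fix $y_0=\id$ and move the regular value where the paper fixes a generic diagonal $B_0$ and evaluates at $\id$; you handle orientations by an explicit frame field along the diagonal torus (recovering orientability for $n$ odd) instead of the paper's transport by the connected group $U(n)$; and you evaluate the sign sum via Gaussian binomial coefficients at $q=-1$ rather than the paper's recursion in Lemma~\ref{lm:comb} (your inversion convention $\epsilon_j=1$, $\epsilon_k=0$ is the mirror image of the paper's $\epsilon_j=0$, $\epsilon_k=1$, which leaves the sum over all $\epsilon$ unchanged).
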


Using Hopf's theorem \cite[Satz 1]{Hopf_Uber_die_Topologie_der_Gruppenmanigfaltigkeiten}, we get a new proof of the following Corollary due to Fuks \cite{Fuks_Maslov_Arnold_characteristic_classes}.
\begin{Cor}[\cite{Fuks_Maslov_Arnold_characteristic_classes}]
For $n$ odd, the rational cohomology ring of $\L$ is an exterior algebra on generators of odd degree.
\end{Cor}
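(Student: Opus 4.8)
The plan is to transport everything to the space of symmetric unitary matrices, where $\Theta$ becomes an explicit algebraic product, then to reduce one of the two degree statements to the fact that a conjugation is a diffeomorphism and the other to the computation of a power-map degree. Writing $(\R^{2n},\om)=\C^n$ with $c$ complex conjugation, every orthogonal anti-symplectic involution $R$ has the form $R=Uc$ with $U\in U(n)$, and $R^2=\id$ forces $U\bar U=\id$, i.e. $U=U^T$. Thus $R\leftrightarrow U$ identifies $\L$ with the space $\Sigma=\{U\in U(n):U^T=U\}$ of symmetric unitary matrices (the standard model of $U(n)/O(n)$ via $A\mapsto AA^T$). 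Using $cX=\bar X c$ one computes
\[
\Theta(Uc,Vc)=U\bar V U\,c,
\]
so under this identification the product is $(U,V)\mapsto U\bar V U$ on $\Sigma$. Since $\L$ is closed and connected (as $U(n)$ is) and orientable for $n$ odd by the cited result, only the two degree conditions remain to be checked.

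For the map $y\mapsto\Theta(x_0,y)$ I fix $x_0\leftrightarrow U_0$; it becomes $V\mapsto U_0\bar V U_0$, the composition of $V\mapsto\bar V$ with $W\mapsto U_0WU_0$. Both are diffeomorphisms of $\Sigma$ (the second has inverse $W\mapsto\bar U_0 W\bar U_0$), so this map is a diffeomorphism and has degree $\pm 1\neq 0$.

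For the map $x\mapsto\Theta(x,y_0)$ I choose $y_0$ to be the standard Lagrangian, i.e. $V_0=\id$; it then becomes the squaring map $s:\Sigma\to\Sigma$, $U\mapsto U^2$ (well defined since $U=U^T$ gives $U^2=(U^2)^T$), and it remains to show $\deg s\neq 0$. This is the heart of the matter, and I would compute it by reducing to a maximal flat torus. The symmetric space $U(n)/O(n)$ has isotropy representation of $O(n)$ on $\{iY:Y=Y^T\text{ real}\}$ via $Q\cdot iY=iQYQ^T$, with maximal abelian subspace the real diagonal matrices; by the spectral theorem every element is $O(n)$-conjugate into it. Exponentiating gives, for every $U\in\Sigma$, a decomposition $U=Q\Lambda Q^T$ with $Q\in O(n)$ and $\Lambda$ diagonal unitary, whence $s(U)=Q\Lambda^2Q^T$. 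On the flat torus $T=\{\Lambda\}\cong(S^1)^n$ the map $s$ is $\Lambda\mapsto\Lambda^2$, of degree $2^n$, and the torus-reduction (Weyl-integration) principle for such conjugation-covered maps yields $\deg s=\pm 2^n\neq 0$.

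The main obstacle is the orientation and sign bookkeeping in this last step: one must rule out cancellation and confirm that the signed count of preimages is genuinely $\pm 2^n$ rather than $0$. I would handle this either by choosing a regular value of $s$ in $T$, verifying that its $2^n$ diagonal preimages are the only ones and that the local degrees, read off from $Ds_U(H)=UH+HU$, do not cancel; or, more robustly, by using the fibration $\det:\Sigma\to S^1$, under which $s$ covers the degree-$2$ map $z\mapsto z^2$ and is therefore a bundle map with $\deg s=2\cdot\deg\!\big(s|_{\mathrm{fiber}}\big)$, the fiber being $SU(n)/SO(n)$, where the same torus argument shows the fiber degree is non-zero. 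Either route gives $\deg s\neq 0$, which together with the previous paragraph verifies that $(\L,\Theta)$ is a $\Gamma$-manifold.
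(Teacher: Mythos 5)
Your reduction to symmetric unitary matrices, the formula $\Theta(U,V)=U\bar VU$, and the treatment of the first map are all correct and agree with the paper (which observes even more simply that $V\mapsto U_0\bar VU_0$ is an involution, hence of degree $\pm1$). The genuine gap is in the heart of your argument: the assertion that a ``torus-reduction (Weyl-integration) principle'' gives $\deg s=\pm 2^n$ is false, and the cancellation you propose to ``rule out'' actually occurs. For a regular value $W=Q\Lambda^2Q^T$ with distinct eigenvalues the $2^n$ preimages are indeed the diagonal square roots, but their local degrees are \emph{not} all of one sign. The paper computes them by transporting $d\Theta_0$ at each preimage $A^\epsilon$, $\epsilon\in\{0,1\}^n$, back to $T_{\id}\L\cong Sym(n)$ via the $U(n)$-action; the resulting map acts diagonally on the entries $q_{jk}$ by the factor $2\cos\big((\theta_j-\theta_k)/4+(\epsilon_j-\epsilon_k)\pi/2\big)$, whose sign is negative exactly when $\epsilon_j=0,\ \epsilon_k=1$ for $j<k$. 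Hence the local sign is $(-1)^{\si(\epsilon)}$ with $\si(\epsilon)$ the parity of the number of such pairs, and the signed sum is $\sum_{\epsilon}(-1)^{\si(\epsilon)}=2^{m+1}$ for $n=2m+1$ (already for $n=3$ one gets $4$, not $8$: the eight terms are $+1+1-1+1+1-1+1+1$). So your claimed value $\pm2^n$ is wrong for all $n\geq 3$, your first fallback (``verify the local degrees do not cancel'') would fail as stated, and nonvanishing is only rescued by the combinatorial identity proved in the paper's Lemma via the recursion $d_n=2d_{n-2}$, $d_1=2$ --- precisely the sign bookkeeping your proposal defers.

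Your second fallback does not repair this: the fibration $\det:\Sigma\to S^1$ with fiber $SU(n)/SO(n)$ only factors the problem, since $\deg s=2\cdot\deg(s|_{\mathrm{fiber}})$ relocates the entire unproven sign analysis to the fiber, where the identical partial cancellation happens (consistently with the paper, the fiber degree is $2^m$, not $2^{n-1}$), so ``the same torus argument'' is exactly the step that is missing. Two smaller points: with $V_0=\id$ the point $\id$ is \emph{not} a regular value of $s$ (its preimage contains positive-dimensional families of symmetric unitary involutions), so one must work at a regular value with distinct eigenvalues, as you half-indicate and as the paper arranges by taking $B_0$ diagonal with distinct angles $\theta_1<\cdots<\theta_n$; and to obtain the Corollary as stated you must still invoke Hopf's theorem \cite[Satz 1]{Hopf_Uber_die_Topologie_der_Gruppenmanigfaltigkeiten} to pass from the $\Gamma$-structure to the conclusion that $H^*(\L;\Q)$ is an exterior algebra on odd-degree generators --- your write-up stops at the $\Gamma$-structure.
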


\begin{Rmk}
The cohomology ring of the oriented and unoriented Lagrangian Grassmannian was computed by Borel and Fuks for all $n$, see
\cite{Borel_La_cohomologie_mod_2,Borel_Sur_la_cohomologie_des_espaces_fibres_principaux,Fuks_Maslov_Arnold_characteristic_classes}. A nice summary of these results can be found in the book of Vassilyev \cite[Chapter 22]{Vassilyev_Lagrange_and_Legendre_characteristic_classes}.
\end{Rmk}

The above situation fits into the following general framework. It is well-known that $\L$ embeds into $U(n)$ as the set $U(n)\cap Sym(n)$, i.e.~the symmetric unitary matrices. Thus, $\L$ can be interpreted as the fixed point set of the involutive anti-isomorphism $A\mapsto A^T$ of $U(n)$. On any Lie group $G$ we can define a new product: $(g,h)\mapsto gh^{-1}g$. If $I:G\to G$ is an involutive anti-isomorphism then this new product restricts to a product on the fixed point set $\mathrm{Fix}(I)$. This is precisely the situation for the Lagrangian Grassmanian, namely the  map $\Theta$ corresponds under the embedding of $\L$ into $U(n)$ to $(g,h)\mapsto gh^{-1}g$.

For general Lie groups this new product does not always give rise to a $\Gamma$-structure for various reasons. For example, if we take $G=O(n)$ resp.~$G=U(n)$ and $I(A):=A^{-1}$, then $\mathrm{Fix}(I)$ can be identified with $\cup_k G(k,n)$, the union of all real resp.~complex Grassmannians, which is not connected. Another example is $G=SU(n)$ with $I=$ transposition. Then for $n=2$ we can identify $\mathrm{Fix}(I)\cong S^2$. But by Hopf's theorem \cite[Satz 1]{Hopf_Uber_die_Topologie_der_Gruppenmanigfaltigkeiten} $S^2$ is not a $\Gamma$-manifold.

\subsection*{Acknowledgements}
Parts of this article were written during a visit of the first two authors at the Forschungs\-institut f\"ur Mathematik (FIM), ETH Z\"urich. The authors thank the FIM for its stimulating working atmosphere.

This material is supported by the SFB 878 -- Groups, Geometry and Actions (PA), by the Alexander von Humboldt Foundation (UF), by Israel Science Foundation grant 1321/2009 and by Marie Curie grant No. 239381 (JPS).

\section{Proof of Theorem \ref{thm:thm_1}}
We recall that the (unoriented) Lagrangian Grassmannian $\L$ is the homogeneous space
\[
\L\cong U(n)/O(n),
\]
see \cite{Arnold_Givental_Symplectic_geometry,McDuff_Salamon_introduction_symplectic_topology}. Since $n$ is odd, $\L$ is a closed connected orientable manifold \cite{Fuks_Maslov_Arnold_characteristic_classes}. The space $\L$ is naturally identified with the space of linear orthogonal anti-symplectic involutions of $\R^{2n}$ with the standard symplectic structure.
Using this identification, we define the map
\[
\Theta:\L\times\L\to\L
\]
by $(R,S)\mapsto RSR$.
In order to prove Theorem \ref{thm:thm_1}, it suffices to show for one choice of basepoint $R_0$ that the mapping degrees of
\[
S\mapsto\Theta(R_0,S)\quad\text{and}\quad S\mapsto\Theta(S,R_0)
\]
are non-zero. Since
\[
S\mapsto \Theta(R_0,S)=R_0SR_0\mapsto \Theta(R_0,R_0SR_0)=R_0R_0SR_0R_0=S,
\]
the first map is an involution and therefore has mapping degree $\pm1$. The non-trivial case is to compute the mapping degree of
\[
\Theta_0(S):=\Theta(S,R_0)=SR_0S\;.
\]
Theorem \ref{thm:thm_1} follows immediately from the following proposition.
\begin{Prop}\label{prop:degree}
The mapping degree of $\Theta_0$ equals
\[
\deg \Theta_0 =2^{m+1}
\]
where $n=2m+1$.
\end{Prop}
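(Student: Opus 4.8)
The plan is to pass to the symmetric--unitary model $\L = U(n)\cap\mathrm{Sym}(n)$ from the discussion preceding the proposition, in which $\Theta$ corresponds to $(g,h)\mapsto gh^{-1}g$. Choosing the admissible basepoint $R_0 = I$ turns $\Theta_0$ into the squaring map $S\mapsto S^2$, so it suffices to compute $\deg(S\mapsto S^2)$ on the space of symmetric unitary matrices. The key structural fact I would establish first is a real spectral decomposition: writing $S = X + iY$ with $X,Y$ real, the conditions $S = S^T$ and $S\bar S = I$ force $X,Y$ to be real symmetric and to commute, hence $S = QDQ^T$ with $Q\in O(n)$ and $D = \mathrm{diag}(e^{i\phi_1},\dots,e^{i\phi_n})$. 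Squaring then acts by $QDQ^T\mapsto QD^2Q^T$, i.e.\ it fixes the eigenframe and doubles the eigenangles. With this in hand I would compute the degree as a signed count of preimages of a regular value.

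Next I would choose a regular value $T = Q_0 D_0 Q_0^T$ with distinct eigenvalues $e^{2i\theta_j}$, the $\theta_j$ generic and ordered so that $0 < \tfrac{1}{2}(\theta_j-\theta_k) < \tfrac{\pi}{2}$ for $j<k$. Since $\Theta_0$ commutes with the conjugation $S\mapsto Q_0 S Q_0^T$, I may assume $Q_0 = I$; then, because $T$ has simple spectrum, every preimage shares its eigenframe and is diagonal, with each eigenvalue an independent square root $e^{i\phi_j} = \pm e^{i\theta_j}$. This gives exactly $2^n$ preimages indexed by sign vectors $\epsilon\in\{\pm 1\}^n$. Linearising at a diagonal preimage $D$ in the natural real frame of $T_D\L$ (the frame given by $v_{jk} = i\,r_{jk}\,e^{i(\phi_j+\phi_k)/2}$, $r_{jk}=r_{kj}\in\R$), the differential of squaring is diagonal, multiplying the coordinate $r_{jk}$ by $2\cos\tfrac{1}{2}(\phi_j-\phi_k)$; in particular all preimages are nondegenerate for generic $\theta$.

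The main obstacle is orientation bookkeeping: the frames above depend on the $\phi_j$, and naively adding the Jacobian signs would use mutually inconsistent orientations. I would resolve this by noting that all preimages lie on the connected diagonal torus $T^n\subset\L$, on which $\phi\mapsto\{v_{jk}(\phi)\}$ is a continuous, nowhere degenerate frame field; its orientation sign against a fixed volume form on $\L$ is therefore constant, and the same frame field evaluated at $D$ and at $D^2$ supplies the oriented bases at source and target. Hence the local degree at $\epsilon$ equals the Jacobian sign $\prod_{j<k}\operatorname{sign}\cos\tfrac{1}{2}(\phi_j-\phi_k)$. With $\phi_j = \theta_j + \pi\,[\epsilon_j = -1]$ and the chosen ordering, this sign is $-1$ precisely for pairs $j<k$ with $\epsilon_j=-1,\ \epsilon_k=+1$, so the local degree is $(-1)^{\mathrm{inv}(\epsilon)}$, where $\mathrm{inv}(\epsilon)$ counts such ``$-\,+$'' pairs.

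It remains to evaluate $\deg\Theta_0 = \sum_{\epsilon}(-1)^{\mathrm{inv}(\epsilon)}$. I would do this with a transfer matrix reading $\epsilon$ left to right while tracking the parity of the number of $-1$'s seen so far, which yields
\[
\deg\Theta_0 = (1\ \ 1)\,M^n\binom{1}{0},\qquad M = \begin{pmatrix} 1 & 1 \\ 1 & -1\end{pmatrix}.
\]
Since $M^2 = 2I$ and $n = 2m+1$, one gets $M^n = 2^m M$ and hence $\deg\Theta_0 = 2^m(1\ \ 1)\,M\binom{1}{0} = 2^{m+1}$. Fixing the volume form so that squaring is orientation preserving at the all-plus preimage pins the overall sign to $+$, giving $\deg\Theta_0 = 2^{m+1}$ as claimed. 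The delicate points to get right are the linearisation computation and the orientation-coherence argument; the combinatorics is then immediate.
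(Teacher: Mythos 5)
Your proof is correct, and while its skeleton (signed count of $2^n$ diagonal preimages indexed by sign vectors, a diagonalized differential with cosine entries, the alternating sum $\sum_\epsilon(-1)^{\si(\epsilon)}$) parallels the paper's, you run the computation in mirror image, and this buys a genuine simplification. The paper fixes a generic diagonal basepoint $B_0$ and counts $\Theta_0^{-1}(\id)$; there, proving that preimages are diagonal requires a Cauchy--Schwarz argument involving the rows and columns of the orthogonal matrix $(r_{jk})$. You instead take the basepoint to be $\id$, so $\Theta_0$ becomes the squaring map $S\mapsto S^2$ on symmetric unitaries, and count preimages of a generic value $T$ with simple spectrum: diagonality is then immediate, since any preimage commutes with its own square and the commutant of $T$ is the diagonal matrices --- your Takagi-type factorization $S=QDQ^T$ (correctly derived from $S=X+iY$ with $X,Y$ commuting real symmetric) is true but not even needed for this step. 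Your linearization, giving the multiplier $2\cos\tfrac{1}{2}(\phi_j-\phi_k)$ on the coordinate $r_{jk}$, is the half-angle analogue of the paper's quarter-angle computation, and your orientation argument --- constancy of the sign of the frame field $v_{jk}(\phi)$ against a fixed volume form --- replaces the paper's appeal to $U(n)$-homogeneity (their $\kappa_U$ preserves orientation because $U(n)$ is connected); just note that your frame is single-valued only on the cover $\R^n$ of the diagonal torus, because of the half-angles, and that is exactly where your connectedness argument must (and does) live. Your transfer matrix with $M^2=2I$ is a slicker packaging of the paper's recursion $d_n=2d_{n-2}$, $d_1=2$ in Lemma~\ref{lm:comb}. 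One small correction: the final sentence about fixing the volume form to ``pin the overall sign to $+$'' is unnecessary and slightly misleading --- for a self-map the degree does not depend on the chosen orientation, and in your setup the unknown constant sign of the frame field against any volume form appears once at the source and once at the target and cancels, so the local degree is intrinsically $\prod_{j<k}\si\cos\tfrac{1}{2}(\phi_j-\phi_k)$ with no residual ambiguity to normalize.
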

\begin{proof}
Identify $\R^{2n} = \C^n$ in the standard way. Denote by $\tau : \C^n \to \C^n$ the map given by complex conjugation of all coordinates simultaneously. It is a standard fact, see for instance \cite{McDuff_Salamon_introduction_symplectic_topology}, that an orthogonal symplectic map $\R^{2n} \to \R^{2n}$ corresponds to a unitary map $\C^n \to \C^n.$ It follows that an orthogonal anti-symplectic map $R : \R^{2n} \to \R^{2n}$ can be written as the composition $A \circ \tau : \C^n \to \C^n$ for $A$ a unitary linear map. The condition $R^2 = \id$ then translates to $A\overline A = \id$. So, we identify
\[
\L = \{ A \in U(n)| A \overline A = \id\}.
\]
Under this identification, the map $\Theta$ is given by 
\[
\Theta(A,B) = A\overline B A.
\]
Let $B_0$ be the unitary matrix corresponding to $R_0.$ Then the map $\Theta_0$ is given by 
\[
\Theta_0(A) = \Theta(A,B_0) = A\overline B_0 A.
\]
In the following, we take $B_0 = B,$ the diagonal matrix with entries $b_{jk} = e^{i\theta_j}\delta_{jk}$ where
\[
0 < \theta_j < 2\pi, \qquad \theta_1 < \theta_2 < \cdots < \theta_n.
\]
For this choice of $B_0,$ we show that $\id$ is a regular value of $\Theta_0$ and compute the signed cardinality of $\Theta_0^{-1}(\id).$

Indeed, if $\Theta_0(A) = \id,$ then $A \overline B A = \id,$ and therefore $\overline A B = A.$ Throughout this paper, we do \emph{not} use the Einstein summation convention. Letting $a_{jk}$ denote the matrix entries of $A,$ we have
\[
\bar a_{jk} e^{i\theta_k} = a_{jk}.
\]
Write $a_{jk} = r_{jk} e^{i\psi_{jk}},$ where $r_{jk} \in \R$ and $0 \leq \psi_{jk} < \pi.$ So,
\[
e^{i2\psi_{jk}} = a_{jk}/\bar a_{jk} = e^{i\theta_k},
\]
and therefore $\psi_{jk} = \theta_k/2.$ Writing the unitary condition for $A$ in terms of $r_{jk}$ and $\psi_{jk},$ we have
\[
\delta_{jl} = \sum_k a_{jk} \bar a_{lk} = \sum_k r_{jk}r_{lk} e^{i(\psi_{jk} -\psi_{lk})} = \sum_k r_{jk}r_{lk}.
\]
Thus $r_{jk}$ is an orthogonal matrix. Furthermore, the condition $A\overline A = \id$ translates to
\[
\delta_{jl} = \sum_k a_{jk} \bar a_{kl} = \sum_k r_{jk}r_{kl} e^{i(\theta_k - \theta_l)/2}.
\]
In particular, taking $j = l,$ we obtain
\[
1 = \sum_k r_{jk}r_{kj} \cos((\theta_k - \theta_j)/2).
\]
Writing $r_{jk}' = \cos((\theta_k - \theta_j)/2)r_{jk},$ we can reformulate the preceding equation in terms of the inner product of the row and column vectors $r_{j\cdot}'$ and $r_{\cdot j}.$ Namely,
\begin{equation}\label{eq:ip}
r_{j\cdot}' \cdot r_{\cdot j}=1\;.
\end{equation}
On the other hand, since $r_{jk}$ is unitary, $|r_{\cdot j}| = 1$ and
\[
|r_{j\cdot}'|^2 = \sum_k r_{jk}^2 \cos^2((\theta_k - \theta_j)/2) \leq \sum_k r_{jk}^2 = |r_{j\cdot}|^2 = 1,
\]
with equality only if $r_{jk} = 0$ when $k \neq j.$ Applying Cauchy-Schwartz to equation~\eqref{eq:ip}, we have
\[
1 \leq |r_{j\cdot}'||r_{\cdot j}| = |r_{j\cdot}'| \leq 1.
\]
Thus equality must hold, and the matrix $r_{jk}$ is diagonal. Moreover, orthogonality implies that $r_{jk} = \pm \delta_{jk}.$ Summing up, $A \in \Theta_0^{-1}(\id)$ if and only if we have $A = A^\epsilon,$ where
\[
\epsilon = (\epsilon_1,\ldots,\epsilon_n), \qquad \epsilon_k \in \{0,1\},
\]
and $A^\epsilon$ is the matrix with elements
\[
a_{jk}^\epsilon =  e^{i(\theta_k/2 + \epsilon_k \pi)}.
\]
In particular, $\Theta_0^{-1}(\id)$ has unsigned cardinality $2^n.$

It remains to show that $\id$ is a regular value and compute the signs. Let $Sym(n)$ denote the space of real $n\times n$ symmetric matrices. It is easy to see that the tangent space to $\L$ at $A = \id$ is given by
\[
T_{\id} \L = \{T \in \mathfrak u(n)|T + \overline T = 0\} = \{iQ | Q \in Sym(n)\}.
\]
Recall that $U(n)$ acts on $\L$ by $A \mapsto U A \overline U^{-1}.$ Thus, if $A = U \overline U^{-1},$ we have an isomorphism
\[
\kappa_U : T_{\id} L \overset{\sim}{\longrightarrow} T_A \L
\]
given by $T \mapsto U T \overline U^{-1}.$ Since $\L$ is a $U(n)$ homogeneous space, the isomorphism $\kappa_U$ preserves orientation. Moreover, for $T \in T_{A^\epsilon} \L$ we have
\[
d\Theta_0|_{A^\epsilon}(T) = T \overline B A^\epsilon + A^\epsilon \overline B T = T \overline A^\epsilon + \overline A^\epsilon T.
\]
If $U^\epsilon \in U(n)$ satisfies
\[
A^\epsilon = U^\epsilon (\overline U^\epsilon)^{-1},
\]
then $A^\epsilon$ is a regular point of $\Theta_0$ if the linear map
\[
\alpha^\epsilon = d\Theta_0|_{A^\epsilon} \circ \kappa_U : T_{\id} \L \to T_{\id} \L
\]
is invertible, and in that case the sign of $A^\epsilon$ is $\si \det(\alpha^\epsilon).$
Explicitly,
\begin{align*}
\alpha^\epsilon(T) &= U^\epsilon T (\overline U^\epsilon)^{-1} \overline A^\epsilon + \overline A^\epsilon U^\epsilon T (\overline U^\epsilon)^{-1} \\
&= U^\epsilon T (U^\epsilon)^{-1} + \overline U^\epsilon T (\overline U^\epsilon)^{-1} \\
&= U^\epsilon T (U^\epsilon)^{-1} - \overline U^\epsilon \overline T (\overline U^\epsilon)^{-1} \\
&= 2i\im(U^\epsilon T (U^\epsilon)^{-1}).
\end{align*}
Writing $T = iQ,$ we can think of $\alpha^\epsilon$ as the map $Sym(n) \to Sym(n)$ given by
\[
\alpha^\epsilon(Q) = 2\re(U^\epsilon Q (U^\epsilon)^{-1}).
\]
For convenience, we take $U^\epsilon$ to be the unitary linear map given by
\[
u_{jk}^\epsilon = e^{i(\theta_k/4 + \epsilon_k \pi/2)}\delta_{jk}.
\]
Then, denoting by $q_{jk}$ the matrix elements of $Q,$ we have
\begin{align*}
\alpha^\epsilon(Q)_{jk} &= 2\re\left(e^{i\big((\theta_j-\theta_k)/4 + (\epsilon_j -\epsilon_k)\pi/2\big)}\right)q_{jk} \\
 & = 2 \cos\big((\theta_j-\theta_k)/4 + (\epsilon_j -\epsilon_k)\pi/2\big) q_{jk}.
\end{align*}
Since $Q$ is a symmetric matrix, it is determined by $q_{jk}$ for $j\leq k.$ Thus
\[
\det(\alpha^\epsilon) = \prod_{j\leq k} 2 \cos\big((\theta_j-\theta_k)/4 + (\epsilon_j -\epsilon_k)\pi/2\big) q_{jk}.
\]
We need to show this determinant does not vanish and compute its sign.
For $j = k,$ clearly $\cos\big((\theta_j-\theta_k)/4 + (\epsilon_j -\epsilon_k)\pi/2\big) = 1$.
For $j < k,$ by assumption, $0 < \theta_j < \theta_k < 2\pi,$ so
\[
-\frac{\pi}{2} < \frac{\theta_j - \theta_k}{4} < 0.
\]
It follows that for all $j\leq k$, we have $\cos\big((\theta_j-\theta_k)/4 + (\epsilon_j -\epsilon_k)\pi/2\big) \neq 0.$ Therefore $\det(\alpha^\epsilon) \neq 0$ for all $\epsilon$ and $\id$ is a regular value. Moreover,
\[
\cos\big((\theta_j-\theta_k)/4 + (\epsilon_j -\epsilon_k)\pi/2\big) < 0 \quad \Longleftrightarrow \quad \epsilon_j = 0,\, \epsilon_k = 1.
\]
Let $\Upsilon_n$ be the set of all binary sequences $\epsilon = (\epsilon_1,\ldots,\epsilon_n).$
For $\epsilon \in \Upsilon_n$ define $\si(\epsilon)$ to be the number modulo $2$ of pairs $j<k$ such that $\epsilon_j =0$ and $\epsilon_k = 1.$ The upshot of the preceding calculations is that
\[
\si \det (\alpha^\epsilon) = \si(\epsilon),
\]
therefore
\[
\deg \Theta_0 = \sum_{\epsilon \in \Upsilon_n} (-1)^{\si(\epsilon)}.
\]
A combinatorial argument given below in Lemma~\ref{lm:comb} then implies the theorem.
\end{proof}

\begin{Lemma}\label{lm:comb}
For $n = 2m+1,$ we have
\[
d_n := \sum_{\epsilon \in \Upsilon_n} (-1)^{\si(\epsilon)} = 2^{m+1}.
\]
\end{Lemma}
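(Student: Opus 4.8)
The plan is to prove the identity by a recursion in $n$. Write $z(\epsilon)$ for the number of indices $k$ with $\epsilon_k = 0$, and introduce the companion sum
\[
e_n := \sum_{\epsilon \in \Upsilon_n} (-1)^{\si(\epsilon) + z(\epsilon)}.
\]
First I would analyze how a sequence of length $n$ arises from one of length $n-1$ by appending a final bit $\epsilon_n$, writing $\epsilon' = (\epsilon_1,\ldots,\epsilon_{n-1})$. Appending $\epsilon_n = 0$ creates no new pairs $j<k$ with $\epsilon_j = 0,\ \epsilon_k = 1$, so $\si$ is unchanged while $z$ increases by one; appending $\epsilon_n = 1$ creates exactly $z(\epsilon')$ such new pairs while leaving $z$ fixed. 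Tracking the parities of $\si$ and $z$ under these two operations yields the coupled recursions
\[
d_n = d_{n-1} + e_{n-1}, \qquad e_n = d_{n-1} - e_{n-1},
\]
with initial values $d_0 = e_0 = 1$.

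The remaining step is purely algebraic. Adding the two recursions gives $d_n + e_n = 2 d_{n-1}$, and since the first recursion (applied with $n+1$ in place of $n$) reads $d_{n+1} = d_n + e_n$, this collapses to the single recursion $d_{n+1} = 2 d_{n-1}$. Together with $d_1 = 2$ it yields $d_{2m+1} = 2^{m+1}$ by induction on $m$, which is the assertion.

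I expect the main obstacle to be identifying the correct auxiliary statistic. A naive recursion on the last bit does not close on $d_n$ alone, because appending a $1$ twists the sign by the parity of the number of preceding zeros; one must enlarge the state by recording $z(\epsilon) \bmod 2$ so that the pair $(d_n,e_n)$ obeys a closed linear system. Once this is set up, the bookkeeping is routine — in particular the key cancellation is that appending a $1$ contributes $(-1)^{\si(\epsilon') + 2 z(\epsilon')} = (-1)^{\si(\epsilon')}$ to $e_n$, giving the term $d_{n-1}$ there.

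As a cross-check and alternative I would note that $\sum_{\epsilon \in \Upsilon_n} q^{\si(\epsilon)} = \sum_{k=0}^n \binom{n}{k}_q$ is a sum of Gaussian binomial coefficients, since $\si$ is the area statistic on $0$-$1$ words; thus $d_n$ is this sum evaluated at $q = -1$. The standard evaluation of $\binom{n}{k}_{-1}$ for $n = 2m+1$ odd equals $\binom{m}{\lfloor k/2 \rfloor}$, and summing over $k$ again produces $2\cdot 2^m = 2^{m+1}$. I would nonetheless present the recursion as the main argument, as it is elementary and self-contained.
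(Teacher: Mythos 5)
Your proof is correct and takes essentially the same approach as the paper: the paper also closes the recursion by adjoining one bit and tracking a second parity statistic (the parity $\pa(\epsilon)$ of the number of ones, with counts $M_n,P_n$ in place of your signed sums $d_n,e_n$), and since $z(\epsilon)\equiv n+\pa(\epsilon) \pmod 2$ your pair $(d_n,e_n)$ is just a sign-twisted reformulation of the paper's $(M_n,P_n)$, collapsing to the same two-step recursion $d_n = 2d_{n-2}$ with $d_1 = 2$. Your Gaussian-binomial cross-check at $q=-1$ is a valid independent route, but as you present it only as an aside, the main argument coincides with the paper's.
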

\begin{proof}
Let $M_n$ denote the number of $\epsilon \in \Upsilon_n$ such that $\si(\epsilon) = 0.$ Then
\[
d_n = M_n - (2^n - M_n) = 2 M_n - 2^n.
\]
For $\epsilon \in \Upsilon_n$ denote by $\pa(\epsilon)$ the parity of $\epsilon,$ or in other words the number modulo $2$ of $j$ such that $\epsilon_j = 1.$ Let $P_n$ denote the number of $\epsilon \in \Upsilon_n$ such that $\si(\epsilon) + \pa(\epsilon) = 0.$ By analyzing what happens when we adjoin either $1$ or $0$ to the beginning of a sequence $\epsilon \in \Upsilon_{n-1},$ we find that
\[
M_n = M_{n-1} + P_{n-1}, \qquad P_n = (2^{n-1} - P_{n-1}) + M_{n-1}.
\]
Iterating these recursions twice, we obtain
\[
M_n = M_{n-2} + P_{n-2} + 2^{n-2} - P_{n-2} + M_{n-2} = 2M_{n-2} + 2^{n-2}.
\]
Clearly $M_1 = 2,$ so $d_1 = 2.$ Using the preceding recursion for $M_n,$ we obtain
\[
d_n = 2(2M_{n-2} + 2^{n-2}) - 2^n = 2(2M_{n-2} - 2^{n-2}) = 2 d_{n-2}.
\]
The lemma follows by induction.
\end{proof}

%
%%%%%%%%%%%%%%%%%%%%%%%%%%%%%%%%%%%%%%%%%%%%%%%%%%%%%%%%%%%%%%%%%%%%%%%%%%%%%%%%%%%%%%%%%%%%%%%%%%%%
%%%%%%%%%%%%%%%%%%%%%%%%%%%%%%%%%%%%%%%%%%%%%%%%%%%%%%%%%%%%%%%%%%%%%%%%%%%%%%%%%%%%%%%%%%%%%%%%%%%%
%%%%%%%%%%%%%%%%%%%%%%%%%%%%%%%%%%%%%%%%%%%%%%%%%%%%%%%%%%%%%%%%%%%%%%%%%%%%%%%%%%%%%%%%%%%%%%%%%%%%
%GATHER{../../../Bibtex/bibtex_paper_list.bib}
\bibliographystyle{amsalpha}
\bibliography{../../../../Bibtex/bibtex_paper_list}

\providecommand{\bysame}{\leavevmode\hbox to3em{\hrulefill}\thinspace}
\providecommand{\MR}{\relax\ifhmode\unskip\space\fi MR }
% \MRhref is called by the amsart/book/proc definition of \MR.
\providecommand{\MRhref}[2]{%
  \href{http://www.ams.org/mathscinet-getitem?mr=#1}{#2}
}
\providecommand{\href}[2]{#2}
\begin{thebibliography}{Bor53b}

\bibitem[AG01]{Arnold_Givental_Symplectic_geometry}
V.~I. Arnol$'$d and A.~B. Givental, \emph{Symplectic geometry}, Dynamical
  systems, {IV}, Encyclopaedia Math. Sci., vol.~4, Springer, Berlin, 2001,
  pp.~1--138.

\bibitem[Bor53a]{Borel_La_cohomologie_mod_2}
A.~Borel, \emph{La cohomologie mod {$2$} de certains espaces homog\`enes},
  Comment. Math. Helv. \textbf{27} (1953), 165--197.

\bibitem[Bor53b]{Borel_Sur_la_cohomologie_des_espaces_fibres_principaux}
\bysame, \emph{Sur la cohomologie des espaces fibr\'es principaux et des
  espaces homog\`enes de groupes de {L}ie compacts}, Ann. of Math. (2)
  \textbf{57} (1953), 115--207.

\bibitem[CZ83]{Conley_Zehnder_The_Birkhoff_Lewis_fixed_point_theorem_and_a_conjecture_of_Arnolf}
C.~C. Conley and E.~Zehnder, \emph{The {B}irkhoff-{L}ewis fixed point theorem
  and a conjecture of {V}. {I}. {A}rnold}, Invent. Math. \textbf{73} (1983),
  no.~1, 33--49.

\bibitem[Fuk68]{Fuks_Maslov_Arnold_characteristic_classes}
D.~B. Fuks, \emph{The {M}aslov-{A}rnol $'$d characteristic classes}, Dokl.
  Akad. Nauk SSSR \textbf{178} (1968), 303--306.

\bibitem[Hop41]{Hopf_Uber_die_Topologie_der_Gruppenmanigfaltigkeiten}
H.~Hopf, \emph{\"{U}ber die {T}opologie der {G}ruppen-{M}annigfaltigkeiten und
  ihre {V}erallgemeinerungen}, Ann. of Math. (2) \textbf{42} (1941), 22--52.

\bibitem[MS98]{McDuff_Salamon_introduction_symplectic_topology}
D.~McDuff and D.~A. Salamon, \emph{Introduction to symplectic topology}, second
  ed., Oxford Mathematical Monographs, The Clarendon Press Oxford University
  Press, New York, 1998.

\bibitem[Vas88]{Vassilyev_Lagrange_and_Legendre_characteristic_classes}
V.~A. Vassilyev, \emph{Lagrange and {L}egendre characteristic classes},
  Advanced Studies in Contemporary Mathematics, vol.~3, Gordon and Breach
  Science Publishers, New York, 1988.

\end{thebibliography}
\end{document}